\newtheorem*{theorem-non}{Theorem}
\newtheorem{theorem}{Theorem}
\newtheorem{definition}{Definition}[section]
\newtheorem{example}{Example}[section]
\newenvironment{proof}[1][Proof]{\noindent\textbf{#1.} }{\ \rule{0.5em}{0.5em}}
\numberwithin{theorem}{section}
\numberwithin{equation}{section}
\begin{document}

\title{Affine Factorable Surfaces in Pseudo-Galilean Space}
\author{\emph{H. S. Abdel-Aziz}\thanks{
~E-mail address:~habdelaziz2005@yahoo.com}, \emph{M. Khalifa Saad}\thanks{
~E-mail address:~mohamed\_khalifa77@science.sohag.edu.eg} \ and \emph{%
Haytham. A. Ali}\thanks{
~E-mail address:~haytham.ali88@yahoo.com} \\
$^{\star,\dag,\ddag}${\small \emph{Dept. of Math., Faculty of Science, Sohag
Univ., 82524 Sohag, Egypt}}\\
$^{\dag}$ {\small \emph{Dept. of Math., Faculty of Science, Islamic
University in Madinah, KSA}}}
\date{}
\maketitle

\textbf{Abstract. }An affine factorable surface of the second kind in the
three dimensional pseudo-Galilean space $G_{3}^{1}$ is studied depending on
the invariant theory and theory of differential equation. The first and
second fundamental forms, Gaussian curvature and mean curvature of the meant
surface are obtained according to the basic principles of differential
geometry. Also, some special cases are presented by changing the partial
differential equation into the ordinary differential equation to simplify
the solving process. The classification theorems of the considered surface
with zero and non zero Gaussian and mean curvatures are given. Some examples
of such a study are provided.

\textbf{Keywords:} Affine factorable surface; mean curvature; Gaussian
curvature; minimal surface.\newline
\textbf{Mathematics Subject Classification:} 53A05, 53A10, 53C42.

\section{Introduction}

In classical differential geometry, the problem of obtaining Gaussian and
mean curvatures of a surface in the Euclidean space and other spaces is one
of the most important problems, so we are interested here to study such a
problem for a surface known as affine factorable surface in the three
dimensional pseudo Galilean space $G_{3}^{1}$.

The geometry of Galilean Relativity acts like a \textquotedblleft
bridge\textquotedblright\ from Euclidean geometry to special Relativity. The
Galilean space which can be defined in three-dimensional projective space $%
P_{3}(R)$ is the space of Galilean Relativity \cite{1}. The geometries of
Galilean and pseudo-Galilean spaces have similarities, but, of course, are
different. In the Galilean and pseudo Galilean spaces, some special surfaces
such as surfaces of revolution, ruled surfaces, translation surfaces and
tubular surfaces have been studied in \cite{2,3,4,5,6,7,8,9,10}. For further
study of surfaces in the pseudo Galilean space, we refer the reader to \v{S}%
ip\v{u}s and Divjak's paper \cite{9}. Recall that the graph surfaces are
also known as Monge surfaces \cite{11}. In this work, we are interested here
in studying a special type of Monge surface, namely factorable surface of
second kind that is graph of the function $y(x,z)=f(x)g(z)$. Such surfaces
with $K,$ $H=const.$ in various ambient spaces have been classified (cf \cite%
{12,13,14,15,16}). Our purpose is to analyze the factorable surfaces in the
pseudo-Galilean space $G_{3}^{1}$ that is one of real Cayley-Klein spaces
(for details, see \cite{17,18,19}). There exist three different kinds of
factorable surfaces, explicitly, a Monge surface in $G_{3}^{1}$ is said to
be factorable (so-called homothetical) if it is given in one of the
following forms $\Phi _{1}:z(x,y)=f(x)g(y),$\ the first kind\ $\Phi
_{2}:y(x,z)=f(x)g(z),$ the second kind\ and\ $\Phi _{3}:x(y,z)=f(y)g(z)$ is
the third kind where $f$, $g$ are smooth functions \cite{14}. These surfaces
have different geometric structures in different spaces such as metric,
curvatures, etc.

\section{Basic concepts}

The pseudo-Galilean space $G_{3}^{1}$ is one of the Cayley-Klein spaces with
absolute figure that consists of the ordered triple $\{\omega ,f,I\}$, where 
$\omega $ is the absolute plane given by $x_{o}=0,$ in the three dimensional
real projective space $P_{3}(R)$, $f$ the absolute line in $\omega $ given
by $x_{o}=x_{1}=0$ and $I$ the fixed hyperbolic involution of points of $f$
and represented by $(0:0:x_{2}:x_{3})\rightarrow (0:0:x_{3}:x_{2})$, which
is equivalent to the requirement that the conic $x_{2}^{2}-x_{3}^{2}=0$ is
the absolute conic. The metric connections in $G_{3}^{1}$ are introduced
with respect to the absolute figure. In terms of the affine coordinates
given by $(x_{o}:x_{1}:x_{2}:x_{3})=(1:x:y:z)$, the distance between the
points $p=(p_{1},p_{2},p_{3})$ and $q=(q_{1},q_{2},q_{3})$ is defined by
(see \cite{9,20})%
\begin{equation*}
d(p,q)=\left\{ 
\begin{array}{c}
\left\vert q_{1}-p_{1}\right\vert ,\text{ \ \ \ \ \ \ \ \ \ \ \ \ \ \ \ \ \
\ \ }if\text{ }p_{1}\neq q_{1}, \\ 
\sqrt{\left\vert (q_{2}-p_{2})^{2}-(q_{3}-p_{3})^{2}\right\vert },if\text{ }%
p_{1}=q_{1}.%
\end{array}%
\right.
\end{equation*}

The pseudo-Galilean scalar product of the vectors $X=(x_{1},x_{2},x_{3})$
and $Y=(y_{1},y_{2},y_{3})$ is given by%
\begin{equation*}
\langle X,Y\rangle _{G_{3}^{1}}=\left\{ 
\begin{array}{c}
x_{1}y_{1},\text{ \ \ \ \ \ \ \ \ \ \ \ \ \ }if\text{ }x_{1}\neq 0\text{ }or%
\text{ }y_{1}\neq 0, \\ 
x_{2}y_{2}-x_{3}y_{3},\text{\ \ \ \ \ \ }if\text{ }x_{1}=0\text{ }and\text{ }%
y_{1}=0.%
\end{array}%
\right.
\end{equation*}

In this sense, the pseudo-Galilean norm of a vector $X$ is $\left\Vert
X\right\Vert =\sqrt{\left\vert X.X\right\vert }$. A vector $%
X=(x_{1},x_{2},x_{3})$ is called isotropic (non-isotropic) if $x_{1}=0$ $%
(x_{1}\neq 0)$. All unit non-isotropic vectors are of the form $%
(1,x_{2},x_{3})$. The isotropic vector $X=(0,x_{2},x_{3})$ is called
spacelike, timelike and lightlike if $x_{2}^{2}-x_{3}^{2}>0$, $%
x_{2}^{2}-x_{3}^{2}<0$ and $x_{2}=\pm x_{3}$, respectively. The
pseudo-Galilean cross product of $X$ and $Y$ on $G_{3}^{1}$ is given as
follows%
\begin{equation*}
X\wedge _{G_{3}^{1}}Y=\left\vert 
\begin{array}{ccc}
0 & -e_{2} & e_{3} \\ 
x_{1} & x_{2} & x_{3} \\ 
y_{1} & y_{2} & y_{3}%
\end{array}%
\right\vert ,
\end{equation*}

where $e_{2}$ and $e_{3}$ are canonical basis.

Let $M$ be a connected, oriented 2-dimensional manifold and $\phi
:M\rightarrow G_{3}^{1}$ be a surface in $G_{3}^{1}$ with parameters $(u,v)$%
. The surface parametrization $\phi $ is expressed by%
\begin{equation*}
\phi (u,v)=(x(u,v),y(u,v),z(u,v)).
\end{equation*}

On the other hand, we denote by $E$, $F$, $G$ and $L$, $M$, $N$ the
coefficients of the first and second fundamental forms of $\phi $,
respectively. The Gaussian $K$\ and mean $H$\ curvatures are%
\begin{equation}
K=\frac{LN-M^{2}}{EG-F^{2}},\text{ \ \ \ }H=\frac{EN+GL-2FM}{2\left\vert
EG-F^{2}\right\vert },
\end{equation}

where%
\begin{eqnarray*}
E &=&\phi _{u}^{\prime }.\phi _{u}^{\prime },\text{ \ \ \ }F=\phi
_{u}^{\prime }.\phi _{v}^{\prime },\text{ \ \ \ }G=\phi _{v}^{\prime }.\phi
_{v}^{\prime }, \\
L &=&\phi _{uu}^{\prime \prime }.n,\text{ \ \ \ }M=\phi _{uv}^{\prime \prime
}.n,\text{ \ \ \ }N=\phi _{vv}^{\prime \prime }.n,
\end{eqnarray*}

and%
\begin{equation*}
n=\frac{\phi _{u}^{\prime }\wedge \phi _{v}^{\prime }}{\left\vert \phi
_{u}^{\prime }\wedge \phi _{v}^{\prime }\right\vert }.
\end{equation*}

\section{Factorable surfaces in pseudo-Galilean space $G_{3}^{1}$}

In what follows, we consider the factorable surface of second kind in $%
G_{3}^{1}$ which can be locally written as

\begin{equation}
\phi (x,z)=(x,f(x)g(z),z).
\end{equation}

For this study it is important to consider the following definition:

\begin{definition}
An affine factorable surface in pseudo-Galilean space$\ G_{3}^{1}$ is
defined as a parameter surface $\phi (u,v)$\ which can be written as%
\begin{eqnarray}
\phi (u,v) &=&(x(u,v),y(u,v),z(u,v))  \notag \\
&=&(u,f(u)g(v+au),v)  \notag \\
&=&(x,f(x)g(z+ax),z),
\end{eqnarray}

for non zero constant $a$ and functions $f(x)$ and $g(z+ax)$ \cite{21}.
\end{definition}

Now, from (3.2) by a direct calculation, the first fundamental form with its
coefficients of $\phi $ can be given by

\begin{eqnarray*}
I &=&Edx^{2}+2Fdxdy+Gdy^{2}, \\
E &=&1,\text{ \ \ }F=0,\text{ \ \ \ \ }G=(fg^{\prime })^{2}-1,
\end{eqnarray*}

where%
\begin{equation*}
g^{\prime }=\frac{dg(z+ax)}{d(z+ax)}.
\end{equation*}

Also, the second fundamental form of $\phi $ is%
\begin{equation*}
II=Ldx^{2}+2Mdxdy+Ndy^{2},
\end{equation*}

note that%
\begin{eqnarray*}
L &=&\frac{\left( f^{\prime \prime }g+2af^{\prime }g^{\prime
}+a^{2}fg^{\prime \prime }\right) }{D}, \\
M &=&\frac{\left( f^{\prime }g^{\prime }+afg^{\prime \prime }\right) }{D},%
\text{ \ \ \ \ \ }N=\frac{fg^{\prime \prime }}{D}
\end{eqnarray*}

where%
\begin{equation*}
D(x,z)=\sqrt{1-(fg^{\prime })^{2}}.
\end{equation*}

In addition, the Gaussian and mean curvature of $\phi $ can be obtianed%
\begin{eqnarray}
K &=&\frac{f^{\prime 2}g^{\prime 2}-f^{\prime \prime }fg^{\prime \prime }g}{%
\left( 1-(fg^{\prime })^{2}\right) ^{2}}, \\
H &=&\frac{\Omega (x,z)}{2\left( 1-(fg^{\prime })^{2}\right) ^{\frac{3}{2}}},
\end{eqnarray}

where 
\begin{equation*}
\Omega (x,z)=(1-a^{2})fg^{\prime \prime }-f^{\prime \prime }g-2af^{\prime
}g^{\prime }+f^{2}f^{\prime \prime }g^{\prime 2}g+2af^{\prime
}f^{2}g^{\prime 3}+a^{2}f^{3}g^{\prime 2}g^{\prime \prime }.
\end{equation*}

A surface in $G_{3}^{1}$ is said to be \textit{isotropic minimal} (resp. 
\textit{flat}) if $H$ (resp. $K$) vanishes identically. Further, it is said
to have constant isotropic mean (resp. Gaussian) curvature if $H$ (resp. $K$%
) is a constant function on whole surface.

\section{Affine factorable surfaces with zero curvatures}

In this section, if the Gaussian and mean curvatures of (3.2) are vanished,
then we get the following main result:

\begin{theorem}
Let $\phi :I\subset R\rightarrow G_{3}^{1}$ be an affine factorable surface
of second kind in the form 
\begin{equation*}
\phi (x,z)=(x,f(x)g(z+ax),z),
\end{equation*}%
if its Gaussian curvature is zero, then the surface is one of the following
surfaces:

\item[(1)] $y(x,z)=f_{o}g(z+ax);$

\item[(2)] $y(x,z)=g_{o}f(x);$

\item[(3)] $y(x,z)=ce^{c_{5}x+c_{4}z};$

\item[(4)] $y(x,z)=\left[ (1-k)(c_{6}x+c_{7})\right] ^{\frac{1}{1-k}}\left[
\left( \frac{k-1}{k}\right) (c_{8}(z+ax)+c_{9})\right] ^{\frac{k}{k-1}}.$
\end{theorem}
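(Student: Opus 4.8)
The plan is to set $K=0$ in the formula (4.1), which reduces the problem to the single partial differential equation
\begin{equation*}
f'^{2}g'^{2}-f''fg''g=0,
\end{equation*}
where throughout $f=f(x)$ and $g=g(z+ax)$, with $'$ on $f$ meaning $d/dx$ and $'$ on $g$ meaning $d/d(z+ax)$. The first move is to dispose of the degenerate cases: if $f$ is constant we land in case (1), and if $g$ is constant we land in case (2); so from now on assume $f'\not\equiv 0$ and $g'\not\equiv 0$ on the relevant domain. On the open set where $f,g,f',g'$ are all nonzero I would divide the equation by $fgf'g'$ and separate variables, writing it as
\begin{equation*}
\frac{f''(x)}{f'(x)}\,\frac{1}{f(x)}\Big/\frac{1}{f(x)}\cdots
\end{equation*}
— more precisely, rearrange $f'^{2}g'^{2}=f''fg''g$ into
\begin{equation*}
\frac{ff''}{f'^{2}}=\frac{g'^{2}}{gg''}.
\end{equation*}
The left side depends only on $x$ and the right side only on $w:=z+ax$, so both equal a constant; call it $k$.

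**Integrating the two ODEs.** This yields $ff''=kf'^{2}$ and $gg''=\tfrac1k g'^{2}$ (the case $k=0$, i.e. $f''\equiv 0$ with $f$ non-constant, forces $f'^{2}g'^{2}=0$, contradiction, so $k\neq 0$). Each of these is a standard autonomous-type ODE: from $ff''=kf'^{2}$, dividing by $ff'$ gives $f''/f'=k f'/f$, hence $\log f'=k\log f+\text{const}$, i.e. $f'=c f^{k}$. Integrating once more splits into two sub-cases. If $k=1$ then $f'=cf$ gives $f=c e^{c_{5}x}$; the companion equation for $g$ also has exponent $1/k=1$, giving $g=\tilde c e^{c_{4}w}$, and the product is $y=c e^{c_{5}x+c_{4}z}$, which is case (3) (here I would absorb $c_4 a x$ appropriately, or note it is already of the stated exponential form). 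If $k\neq 1$, integrating $f'=cf^{k}$ gives $f^{1-k}=(1-k)(c_{6}x+c_{7})$, hence $f=\bigl[(1-k)(c_{6}x+c_{7})\bigr]^{1/(1-k)}$; symmetrically, since the $g$-equation has exponent $1/k$, integrating gives $g=\bigl[(1-\tfrac1k)(c_{8}w+c_{9})\bigr]^{1/(1-1/k)}=\bigl[\tfrac{k-1}{k}(c_{8}(z+ax)+c_{9})\bigr]^{k/(k-1)}$, producing case (4).

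**Main obstacle and loose ends.** The computation itself is routine ODE bookkeeping; the real care is in the case analysis at the boundary of the open set used for separation. Specifically one must check that on a connected domain the "constant" $k$ really is globally constant (standard: a function of $x$ equal to a function of $w$ is constant on a connected open set) and that the vanishing loci of $f'$ or $g'$ either are handled by cases (1)–(2) or cannot produce new solutions — e.g. ruling out $f'\equiv 0$ on a subinterval while $f'\not\equiv0$ elsewhere by analyticity/continuity of the solutions of these ODEs. I would also remark that one should double-check the constants: the four forms in the statement carry free constants $c,c_{4},\dots,c_{9},k$ that arise exactly as the two constants of integration from each second-order ODE plus the separation constant, so no genuine generality is lost. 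Finally I would note that the hypothesis "$H=0$ as well" mentioned in the section heading is not actually needed for this theorem — $K=0$ alone yields the classification — so the proof uses only (4.1).
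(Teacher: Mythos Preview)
Your proposal is correct and follows essentially the same route as the paper: reduce $K=0$ to $f'^{2}g'^{2}=ff''gg''$, handle the degenerate cases $f'=0$ or $g'=0$ first, then separate variables via $\dfrac{ff''}{f'^{2}}=\dfrac{g'^{2}}{gg''}=k$, and split into $k=1$ (exponentials) versus $k\neq 1$ (power functions). The paper writes the separation in the equivalent chain-rule form $\dfrac{f\,df_u/df}{f_u}=\dfrac{g_v}{g\,dg_v/dg}=k$ and omits the $k=0$ check and the connectivity remarks you add, but the argument is the same.
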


\begin{proof}
If the Gaussian curvature of $\phi $ is zero, then from (3.3), we have%
\begin{equation}
f^{\prime 2}g^{\prime 2}-f^{\prime \prime }fg^{\prime \prime }g=0.
\end{equation}

To solve this equation we have the following cases to be discussed:

\textbf{Case1. }if $f^{\prime }=0,$ then $f^{\prime \prime }=0,$ $%
f=f_{o}=const.,$ then $y(x,z)=f_{o}g(z+ax)$.

\textbf{Case2. }if $g^{\prime }=0,$ then $g^{\prime \prime }=0,$ $%
g=g_{o}=const.,$ then $y(x,z)=g_{o}f(x)$.

\textbf{Case3. }if $f^{\prime }\neq 0$ and$\ g^{\prime }\neq 0,$ let 
\begin{equation*}
\left\{ 
\begin{array}{c}
u=x, \\ 
v=z+ax,%
\end{array}%
\right.
\end{equation*}

where $\partial (u,v)/\partial (x,z)\neq 0$. Then (4.1) can be written as%
\begin{equation*}
f_{u}^{2}g_{v}^{2}-ff_{uu}gg_{vv}=0,
\end{equation*}

or%
\begin{equation}
\left( \frac{df}{du}\right) ^{2}\left( \frac{dg}{dv}\right) ^{2}=f\frac{%
df_{u}}{df}\frac{df}{du}g\frac{dg_{v}}{dg}\frac{dg}{du}.
\end{equation}

From (4.2), we have%
\begin{equation*}
\frac{df}{du}\frac{dg}{dv}=f\frac{df_{u}}{df}g\frac{dg_{v}}{dg}.
\end{equation*}

Since $\frac{df}{du}\frac{dg}{dv}\neq 0$ and$\ g\frac{dg_{v}}{dg}\neq 0,$ so%
\begin{equation}
\left( \frac{f\frac{df_{u}}{df}}{f_{u}}\right) =\left( \frac{g_{v}}{g\frac{%
dg_{v}}{dg}}\right) ,
\end{equation}

let's write the last equation as follows%
\begin{equation}
\left( \frac{f\frac{df_{u}}{df}}{f_{u}}\right) =\left( \frac{g_{v}}{g\frac{%
dg_{v}}{dg}}\right) =k,\text{ \ \ }k=const.
\end{equation}

\ \ \ \ \ \ \ \ (a) If $k=1,$ then from (4.4), we have%
\begin{equation}
\frac{df_{u}}{f_{u}}=\frac{df}{f},\text{ \ \ }\frac{dg_{v}}{g_{v}}=\frac{dg}{%
g}.
\end{equation}

Solving this equation takes the form%
\begin{equation*}
f=c_{1}e^{c_{2}u},\text{ \ \ \ }g=c_{3}e^{c_{4}v},
\end{equation*}

where $c_{1},c_{2},c_{3},c_{4}$ are constants. And then%
\begin{eqnarray*}
y(x,z) &=&f(x)g(z+ax)=c_{1}e^{c_{2}x}c_{3}e^{c_{4}(z+ax)} \\
&=&c_{5}e^{c_{6}x+c_{4}z},
\end{eqnarray*}

where $c_{5}=c_{1}c_{3}$ and $c_{6}=c_{2}+ac_{4}$ are constants.

\textbf{\ \ \ \ \ } \ \ \ (b) When $k\neq 1,$ then from (4.4), we have%
\begin{equation*}
f\frac{df_{u}}{df}=kf_{u},\text{ \ \ \ }kg\frac{dg_{v}}{dg}=g_{v},
\end{equation*}%
which has the solution%
\begin{eqnarray*}
f(x) &=&\left[ (1-k)(c_{7}x+c_{8})\right] ^{\frac{1}{1-k}}, \\
g(z+ax) &=&\left[ \left( \frac{k-1}{k}\right) (c_{9}(z+ax)+c_{10})\right] ^{%
\frac{k}{k-1}}.
\end{eqnarray*}

Therefore we find that%
\begin{equation*}
y(x,z)=\left[ (1-k)(c_{7}x+c_{8})\right] ^{\frac{1}{1-k}}\left[ \left( \frac{%
k-1}{k}\right) (c_{9}(z+ax)+c_{10})\right] ^{\frac{k}{k-1}},
\end{equation*}

where $c_{7},c_{8},c_{9}$ and $c_{10}$ are constants.
\end{proof}

\begin{theorem}
For a given affine factorable surface of second kind in a three dimensional
pseudo-galilean space in the form%
\begin{equation*}
\phi (x,z)=(x,f(x)g(z+ax),z).
\end{equation*}

Let its mean curvature equal zero, then this surface will be one of the
following:

\item[(1)] $y(x,z)=f_{o}(b_{1}(z+ax)+b_{2}),$ or $\ y(x,z)=f_{o}\left( \sqrt{%
\frac{a^{2}-1}{a^{2}f_{o}^{2}}}(z+ax)+b_{3}\right) ;$

\item[(2)] $y(x,z)=g_{o}(b_{4}x+b_{5});$

\item[(3)] $y(x,z)=b_{8}(b_{6}x+b_{7}),$ \ or $\
y(x,z)=(b_{6}x+b_{7})(b_{9}(z+ax)+b_{10});$

\item[(4)] $y(x,z)=(b_{12}x+b_{13})(b_{11}(z+ax)+b_{12}),$ \ \ or $\ \ \
y(x,z)=\frac{1}{b_{11}}(b_{11}(z+ax)+b_{12}).$
\end{theorem}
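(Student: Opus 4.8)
The plan is to follow the template of the proof of Theorem 4.1, with the Gauss‑curvature equation replaced by the mean‑curvature equation. By (3.5), $H\equiv 0$ is equivalent to the vanishing of the numerator $\Omega$, i.e.
\[
(1-a^{2})fg^{\prime\prime}-f^{\prime\prime}g-2af^{\prime}g^{\prime}+f^{2}f^{\prime\prime}g^{\prime 2}g+2af^{\prime}f^{2}g^{\prime 3}+a^{2}f^{3}g^{\prime 2}g^{\prime\prime}=0 .
\]
As in Theorem 4.1 I would pass to the coordinates $u=x$, $v=z+ax$ (legitimate since $\partial(u,v)/\partial(x,z)=1\neq 0$), so that $f=f(u)$ and $g=g(v)$, and then regroup $\Omega$ by collecting the $g^{\prime\prime}$‑terms, the $f^{\prime\prime}$‑terms and the remaining $f^{\prime}g^{\prime}$‑term:
\[
fg^{\prime\prime}\bigl[(1-a^{2})+a^{2}f^{2}g^{\prime 2}\bigr]-\bigl(f^{\prime\prime}g+2af^{\prime}g^{\prime}\bigr)\bigl(1-f^{2}g^{\prime 2}\bigr)=0 .
\]
(Equivalently, writing $y=fg$ this is $y_{zz}=(1-y_{z}^{2})\,y_{xx}$, which may be used as a consistency check.) Every case below is extracted from this identity.

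First I would treat the two ``constant'' branches. If $f^{\prime}=0$ then $f=f_{o}$ and $f^{\prime\prime}=0$, so the identity becomes $f_{o}g^{\prime\prime}\bigl[(1-a^{2})+a^{2}f_{o}^{2}g^{\prime 2}\bigr]=0$; hence either $g^{\prime\prime}=0$, which makes $g$ affine in $z+ax$ and gives the first surface in (1), or $g^{\prime 2}=(a^{2}-1)/(a^{2}f_{o}^{2})$ is constant, again forcing $g$ affine but with the prescribed slope, which gives the second surface in (1). Symmetrically, if $g^{\prime}=0$ then $g=g_{o}$, $g^{\prime\prime}=0$, and the identity reduces to $f^{\prime\prime}g_{o}=0$, so $f$ is affine and one obtains the surface in (2).

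The remaining and substantive case is $f^{\prime}\neq 0$, $g^{\prime}\neq 0$, and this is where the real work sits. Here I would split further according to whether $f^{\prime\prime}$ and $g^{\prime\prime}$ vanish identically. When $f^{\prime\prime}=0$ (so $f=b_{6}x+b_{7}$ with $b_{6}\neq 0$) the second displayed equation becomes
\[
(b_{6}x+b_{7})g^{\prime\prime}\bigl[(1-a^{2})+a^{2}(b_{6}x+b_{7})^{2}g^{\prime 2}\bigr]=2ab_{6}g^{\prime}\bigl(1-(b_{6}x+b_{7})^{2}g^{\prime 2}\bigr),
\]
which, read as an identity in $x$, pins down $g$ and should produce the surfaces listed in (3). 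The complementary sub‑case ($f^{\prime\prime}\neq 0$ and $g^{\prime\prime}\neq 0$) I would handle in the same spirit as the $k\neq 1$ branch of Theorem 4.1: separate the variables in the reduced equation, integrate the resulting ordinary differential equations for $f$ and for $g$, and read off the forms listed in (4). In each sub‑case one must discard the degenerate solutions (those with $1-f^{2}g^{\prime 2}\equiv 0$, where $D$ vanishes) and check that the surviving pair $f,g$ indeed returns $\Omega\equiv 0$.

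The main obstacle is precisely this last case. Unlike the Gauss‑curvature equation of Theorem 4.1, the mean‑curvature identity does not separate after a single division, because the factor $1-f^{2}g^{\prime 2}$ (namely $D^{2}$) couples $u$ and $v$; the reason for first isolating the $g^{\prime\prime}$‑terms is to confine the coupling to the explicit polynomial $(1-a^{2})+a^{2}f^{2}g^{\prime 2}$, after which a coefficient comparison (in the $f^{\prime\prime}=0$ branch) or a genuine separation of variables (in the other branch) becomes feasible. Carrying the bookkeeping of these sub‑cases through correctly, and eliminating the spurious and degenerate possibilities, is the delicate part of the argument.
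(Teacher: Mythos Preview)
Your setup and your handling of the two ``constant'' branches (items (1) and (2)) and of the branch $f''=0$ (item (3)) match the paper. The gap is in your last branch. You describe the ``complementary sub-case'' as $f''\neq 0$ and $g''\neq 0$ and propose to reach the surfaces of item (4) from there by a separation-of-variables argument analogous to the $k\neq 1$ branch of Theorem~4.1. This cannot work: every surface listed in item (4) has $g$ affine in $z+ax$, i.e.\ $g''\equiv 0$, so none of them can emerge from a case that assumes $g''\neq 0$. In fact the paper's case (4) is precisely the hypothesis $g''=0$ (with $g'=b_{11}\neq 0$), not the doubly nonvanishing case; the paper then differentiates the reduced identity once with respect to $v$ to obtain $b_{11}f_{uu}(1-b_{11}^{2}f^{2})=0$, whence $f$ is affine or $f=1/b_{11}$, giving exactly the two surfaces in (4). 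Similarly, in case (3) the paper differentiates three times with respect to $u$ to force $g_{v}^{2}g_{vv}=0$, which is the concrete mechanism behind your phrase ``read as an identity in $x$''.

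So your case split is off by one branch: the four items of the theorem correspond to the four hypotheses $f'=0$, $g'=0$, $f''=0$, $g''=0$, and the paper's proof treats exactly these. The genuinely generic case $f',g',f'',g''$ all nonvanishing is not addressed in the paper's proof (nor represented in the theorem's list), so your proposed separation-of-variables attack on that case is aimed at a target the theorem, as stated, does not cover.
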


\begin{proof}
If $H=0$, then from (3.4), we have%
\begin{equation}
(1-a^{2})fg^{\prime \prime }-f^{\prime \prime }g-2af^{\prime }g^{\prime
}+f^{2}f^{\prime \prime }g^{\prime 2}g+2af^{\prime }f^{2}g^{\prime
3}+a^{2}f^{3}g^{\prime 2}g^{\prime \prime }=0.
\end{equation}

This equation can be solved by introducing the following:

\ \ \ (1) If $f^{\prime }=f^{\prime \prime }=0$, then $f=f_{o}=const.,$ and
(4.6) becomes%
\begin{equation*}
(1-a^{2})fg^{\prime \prime }+a^{2}f^{3}g^{\prime 2}g^{\prime \prime }=0.
\end{equation*}

It can be written in a simple form%
\begin{equation*}
g^{\prime \prime }=0\text{ \ \ or \ \ }g^{\prime }=\sqrt{\frac{a^{2}-1}{%
a^{2}f_{o}^{2}}},
\end{equation*}

which gives the solution%
\begin{equation*}
g=b_{1}(z+ax)+b_{2}\text{ \ \ or \ \ }g=\sqrt{\frac{a^{2}-1}{a^{2}f_{o}^{2}}}%
(z+ax)+b_{3},
\end{equation*}

it is so%
\begin{equation*}
y(x,z)=f_{o}(b_{1}(z+ax)+b_{2}),
\end{equation*}

or%
\begin{equation*}
y(x,z)=f_{o}\left( \sqrt{\frac{a^{2}-1}{a^{2}f_{o}^{2}}}(z+ax)+b_{3}\right) ,
\end{equation*}

where $b_{1},b_{2}$ and$\ b_{3}$ are constants.

\ \ \ (2) When $g^{\prime }=g^{\prime \prime }=0$, then $g=g_{o}=const.,$
and (4.6) becomes%
\begin{equation*}
f^{\prime \prime }g=0,
\end{equation*}

which has the solution%
\begin{equation*}
f=b_{4}x+b_{5}.
\end{equation*}

Using what we got from solutions we can write%
\begin{equation*}
y(x,z)=g_{o}(b_{4}x+b_{5}),
\end{equation*}

where $b_{4},b_{5}$ are constants.

\ \ \ (3) When $f^{\prime \prime }=0$, this leads to $f^{\prime }=b_{6}$
which gives $f=b_{6}x+b_{7}.$ From (4.6), we have%
\begin{equation*}
(1-a^{2})fg^{\prime \prime }-2af^{\prime }g^{\prime }+2af^{\prime
}f^{2}g^{\prime 3}+a^{2}f^{3}g^{\prime 2}g^{\prime \prime }=0,
\end{equation*}

which can be written as%
\begin{equation*}
(1-a^{2})fg_{vv}-2af_{u}g_{v}+2af_{u}f^{2}g_{v}^{3}+a^{2}f^{3}g_{v}^{2}g_{vv}=0.
\end{equation*}

Differentiating this equation three times with respect to $u$, we obtain%
\begin{equation*}
g_{v}^{2}g_{vv}=0,
\end{equation*}

which gives%
\begin{equation*}
g_{v}=0\text{\ }\rightarrow \text{\ }g=b_{8},
\end{equation*}

or%
\begin{equation*}
g_{vv}=0\text{ }\rightarrow \text{\ }g=b_{9}(z+ax)+b_{10},
\end{equation*}

in light of this, we get%
\begin{equation*}
y(x,z)=b_{8}(b_{6}x+b_{7}),
\end{equation*}

or%
\begin{equation*}
y(x,z)=(b_{6}x+b_{7})(b_{9}(z+ax)+b_{10}),
\end{equation*}

where $b_{6},b_{7},b_{8},b_{9}$ and$\ b_{10}$ are constants$.$

\ \ (4) If $g^{\prime \prime }=0$, this means that $g^{\prime }=b_{11}$ $%
\rightarrow $ $g=b_{11}(z+ax)+b_{12}$ and\ then from (4.6), we obtain%
\begin{equation*}
f^{\prime \prime }g+2af^{\prime }g^{\prime }-f^{2}f^{\prime \prime
}g^{\prime 2}g-2af^{\prime }f^{2}g^{\prime 3}=0,
\end{equation*}

which can be written as%
\begin{equation*}
f_{uu}g+2af_{u}g_{v}-f^{2}f_{uu}g_{v}^{2}g-2af_{u}f^{2}g_{v}^{3}=0.
\end{equation*}

If we differentiate this equation with respect to $v$, we get%
\begin{equation*}
b_{11}f_{uu}-b_{11}^{3}f^{2}f_{uu}=0,
\end{equation*}%
\begin{equation*}
f_{uu}=0\text{ }\rightarrow \text{\ }f=b_{12}x+b_{13},
\end{equation*}

or%
\begin{equation*}
f=\frac{1}{b_{11}}.
\end{equation*}

So, we have%
\begin{equation*}
y(x,z)=(b_{12}x+b_{13})(b_{11}(z+ax)+b_{12}),
\end{equation*}

or%
\begin{equation*}
y(x,z)=\frac{1}{b_{11}}(b_{11}(z+ax)+b_{12}),
\end{equation*}

taking into cosideration $b_{11},b_{12}$ and$\ b_{13}$ are constants. This
completes the proof.
\end{proof}

\section{Affine factorable surfaces with non zero curvatures}

In this section, we describe the affine factorable surfaces of second kind
in $G_{3}^{1}$ when $K=const.\neq 0$ and $H=const.\neq 0$. So, we start as
follows:

\begin{theorem}
Let $\phi :I\subset R\rightarrow G_{3}^{1}$\ be an affine factorable surface
of second kind in $G_{3}^{1}$. Let its Gaussian curvature is non-zero
constant, then the surface takes the form:%
\begin{equation*}
y(x,z)=\left( g_{o}(z+ax)+\lambda _{2}\right) \left( \pm \frac{1}{g_{o}}%
\tanh \left[ \sqrt{K_{o}}x\mp g_{o}\lambda _{1}\right] \right) ,\text{ \ \ }%
\lambda _{1},\lambda _{2}\in R.
\end{equation*}
\end{theorem}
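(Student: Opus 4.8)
The plan is to start from the vanishing-of-$(K-K_o)$ equation obtained by setting the right-hand side of (3.3) equal to a nonzero constant $K_o$, namely
\[
f'^{2}g'^{2}-f''fg''g = K_{o}\left(1-(fg')^{2}\right)^{2},
\]
and to pass to the convenient coordinates $u=x$, $v=z+ax$ as in the previous proofs, so that primes on $f$ become $u$-derivatives and primes on $g$ become $v$-derivatives. I expect the author wants to rule out the degenerate branches $f'\equiv 0$ and $g'\equiv 0$ first (these give $K\equiv 0$, contradicting $K_o\neq 0$), so that one may legitimately divide by $f$, $g$, $f'$, $g'$. The key structural move should then be a separation-of-variables argument: rewrite the PDE so that one side depends only on $u$ and the other only on $v$, forcing each to equal a constant, exactly the technique used in Theorem 4.1.

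The main computational heart is to guess the right separation ansatz. Looking at the target answer, $g$ is affine in $v$, so I would try to show $g''\equiv 0$, i.e.\ $g(v)=g_{o}v+\lambda_2$; substituting $g''=0$ into the $K=K_o$ equation collapses it to
\[
f'^{2}g_{o}^{2} = K_{o}\left(1-(fg_{o})^{2}\right)^{2},
\]
an ODE in $f$ alone. Taking square roots gives $f'g_{o}=\pm\sqrt{K_o}\,(1-(fg_o)^2)$, which is separable: $\displaystyle\int \frac{d(g_o f)}{1-(g_o f)^2} = \pm\sqrt{K_o}\,x + C$. Since $\int \frac{dt}{1-t^2}=\operatorname{arctanh} t$ (on the relevant range), this integrates to $g_o f = \pm\tanh\!\big[\sqrt{K_o}\,x \mp g_o\lambda_1\big]$, hence $f(x)=\pm\frac{1}{g_o}\tanh\!\big[\sqrt{K_o}\,x\mp g_o\lambda_1\big]$, and multiplying by $g(z+ax)=g_o(z+ax)+\lambda_2$ yields precisely the claimed surface $y(x,z)=\big(g_o(z+ax)+\lambda_2\big)\big(\pm\frac{1}{g_o}\tanh[\sqrt{K_o}\,x\mp g_o\lambda_1]\big)$.

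The step I expect to be the genuine obstacle — and the part of the argument that must be filled in carefully rather than waved through — is justifying that the case $g''\not\equiv 0$ does not occur, or more precisely that every solution reduces to the $g''=0$ branch. The natural route is the one already used twice in Theorem 4.2: after clearing denominators, differentiate the relation repeatedly with respect to $u$ (exploiting that $f$, $f'$, $f''$ and their products are functionally independent unless $f$ is very special) to peel off $u$-dependence and force a constraint like $g'^2 g'' = 0$ or an analogous monomial identity in $g$ and its derivatives; combined with $g'\neq 0$ this gives $g''=0$. One must also separately treat the sub-case where $f$ is itself so special (e.g.\ $f''\equiv 0$, $f$ affine) that the differentiation trick stalls, and check that it either feeds back into the same family or is excluded by $K_o\neq 0$. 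Finally I would record the sign and domain conventions ($1-(g_of)^2>0$ so that $D$ is real and $\operatorname{arctanh}$ is the correct antiderivative) and absorb the constants of integration into $\lambda_1,\lambda_2\in\mathbb{R}$, which completes the classification.
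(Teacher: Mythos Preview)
Your plan is correct and mirrors the paper's own argument almost exactly: the paper also first discards the cases where $f$ or $g$ is constant (forcing $K_o=0$), then splits into (i) $f''=0$ with $f'\neq 0$, which yields a polynomial identity in $g'$ with no solution; (ii) $g''=0$ with $g'=g_o\neq 0$, where the equation collapses to $f'^{2}g_o^{2}=K_o(1-(fg_o)^2)^2$ and integrates via $\operatorname{arctanh}$ to the stated $\tanh$ form; and (iii) $f''\neq 0$, $g''\neq 0$, which is eliminated by differentiating in $u$ and $v$ to force $f'=0$, a contradiction. Your ODE integration in the $g''=0$ branch is carried out more carefully than in the paper, and your identification of the ``genuine obstacle'' (ruling out $g''\not\equiv 0$ by repeated differentiation, with the $f''\equiv 0$ sub-case handled separately) is precisely the content of the paper's Cases~1 and~3.
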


\begin{proof}
Let $K_{o}$ be a non-zero constant Gaussian curvature. Hence, we get%
\begin{equation}
K_{o}=\frac{f^{\prime 2}g^{\prime 2}-f^{\prime \prime }fg^{\prime \prime }g}{%
\left( 1-(fg^{\prime })^{2}\right) ^{2}}.
\end{equation}

from this equation, $K_{o}$ vanishes identically when $f$ or $g$ is a
constant function. Then $f$ and $g$ must be non-constant functions. We
distinguish two cases for eq(5.1):

\textbf{Case1. }$f^{\prime }=f_{o},$ $f_{o}\in R-\{0\},$ then from eq(5.1),
we can get polynomial equation on ($g^{\prime }$):%
\begin{equation*}
K_{o}-(2K_{o}f^{2}+f_{o}^{2})g^{\prime 2}+K_{o}f^{4}g^{\prime 4}=0,
\end{equation*}

which yields a contradiction.

\textbf{Case2.} $g^{\prime }=g_{o},$ $g_{o}\in R-\{0\}.$ Then eq(5.1) leads
to%
\begin{equation*}
f^{\prime }=\frac{\pm \sqrt{K_{o}-2K_{o}g_{o}^{2}f^{2}+K_{o}g_{o}^{4}f^{4}}}{%
g_{o}},
\end{equation*}

after solving this equation, we obtain%
\begin{equation*}
f(x)=\pm \frac{1}{g_{o}}\tanh \left[ g_{o}\sqrt{K_{o}}x\mp g_{o}\lambda _{1}%
\right] ,\text{ \ }\lambda _{1}\in R.
\end{equation*}%
\textbf{Case3. }$f^{\prime \prime }\neq 0,$ $g^{\prime \prime }\neq 0$. Then
eq(5.1) can be arranged as follows:%
\begin{equation*}
K_{o}=\frac{f^{\prime 2}g^{\prime 2}-f^{\prime \prime }fg^{\prime \prime }g}{%
\left( 1-(fg^{\prime })^{2}\right) ^{2}},
\end{equation*}

let $u=x,$ $v=z+ax$ and $\partial (u,v)/\partial (x,y)\neq 0$, we can obtain%
\begin{equation}
K_{o}=\frac{f_{u}^{2}g_{v}^{2}-f_{uu}fg_{vv}g}{\left( 1-(fg_{v})^{2}\right)
^{2}}.
\end{equation}%
The partial derivative of (5.2) with respect to $u$ and $v$ leads to a
polynomial equation%
\begin{equation}
\frac{f^{\prime }}{f^{2}f^{\prime \prime }}+\frac{3f^{\prime }f^{2}}{%
f^{\prime \prime }}g^{\prime 4}=0,
\end{equation}

which means that all coefficients must vanish, the contradiction $f^{\prime
}=0$ is obtained. Thus the proof is completed.
\end{proof}

\begin{theorem}
For a given affine factorable surface of second kind in $G_{3}^{1}$ which
has a non-zero constant mean curvature $H_{o}$. Then the following occurs:%
\begin{eqnarray*}
y(x,z) &=&f_{o}\left( \frac{\sqrt{9H_{o}^{2}-a^{4}f_{o}^{2}\lambda _{3}^{2}}%
}{3f_{o}H_{o}}(z+ax)+\lambda _{4}\right) , \\
y(x,z) &=&\left( -\frac{2H_{o}}{g_{o}}x^{2}+cx+c\right) g_{o}.
\end{eqnarray*}
\end{theorem}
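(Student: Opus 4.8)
My plan is to imitate the proof of Theorem 5.1, starting from the expression (3.4) for the mean curvature. Imposing $H\equiv H_{o}$ with $H_{o}$ a fixed non-zero constant and clearing the denominator turns the problem into the single PDE
\begin{equation*}
(1-a^{2})fg^{\prime \prime }-f^{\prime \prime }g-2af^{\prime }g^{\prime }+f^{2}f^{\prime \prime }g^{\prime 2}g+2af^{\prime }f^{2}g^{\prime 3}+a^{2}f^{3}g^{\prime 2}g^{\prime \prime }=2H_{o}\bigl(1-(fg^{\prime })^{2}\bigr)^{3/2}
\end{equation*}
for the factors $f(x)$ and $g(z+ax)$. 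Unlike the Gaussian case, $H_{o}\neq 0$ does \emph{not} force both $f$ and $g$ to be non-constant, so I would split into three cases: $f$ constant, $g$ constant, and $f,g$ both non-constant.

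In the case $f=f_{o}$ constant one has $f^{\prime }=f^{\prime \prime }=0$, and the PDE collapses to an ODE in the single variable $v=z+ax$, namely $f_{o}g^{\prime \prime }\bigl((1-a^{2})+a^{2}f_{o}^{2}g^{\prime 2}\bigr)=2H_{o}\bigl(1-f_{o}^{2}g^{\prime 2}\bigr)^{3/2}$. Treating this as a first-order equation for $p=g^{\prime }$ (order reduction) and integrating once introduces an integration constant $\lambda _{3}$; solving the resulting relation for $g$ and relabelling the remaining constant as $\lambda _{4}$ should produce the first surface $y=f_{o}\bigl(\tfrac{\sqrt{9H_{o}^{2}-a^{4}f_{o}^{2}\lambda _{3}^{2}}}{3f_{o}H_{o}}(z+ax)+\lambda _{4}\bigr)$. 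In the case $g=g_{o}$ constant one has $g^{\prime }=g^{\prime \prime }=0$, so $D=\sqrt{1-(fg^{\prime })^{2}}=1$ and the PDE reduces to $-f^{\prime \prime }g_{o}=2H_{o}$, i.e. $f^{\prime \prime }=-2H_{o}/g_{o}$; integrating twice in $x$ gives the quadratic profile, hence the second surface $y=\bigl(-\tfrac{2H_{o}}{g_{o}}x^{2}+cx+c\bigr)g_{o}$.

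Finally, for the generic case $f,g$ both non-constant I would argue exactly as in Theorem 5.1: put $u=x$, $v=z+ax$ with $\partial(u,v)/\partial(x,z)\neq 0$, rewrite the identity in the separated variables using $f(u)$, $g(v)$, $f_{u}$, $f_{uu}$, $g_{v}$, $g_{vv}$, square both sides to remove the radical so that the relation becomes polynomial, and then differentiate repeatedly in $u$. Each differentiation separates the $u$- and $v$-dependence further, and comparing coefficients of the independent monomials in $g_{v}$, $g_{vv}$ forces every $u$-coefficient to vanish; tracking these relations yields $f_{u}\equiv 0$, contradicting the assumption, so no further surface arises. The step I expect to be the main obstacle is the first case: carrying out the order-reduced integration of its nonlinear ODE in closed form, keeping careful track of the $\pm$ ambiguities coming from the square root $\bigl(1-(fg^{\prime })^{2}\bigr)^{3/2}$, and matching the constants of integration to the precise coefficient $\tfrac{\sqrt{9H_{o}^{2}-a^{4}f_{o}^{2}\lambda _{3}^{2}}}{3f_{o}H_{o}}$; the elimination bookkeeping in the generic case is the secondary difficulty.
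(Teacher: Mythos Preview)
Your overall framework matches the paper's: start from (3.4), impose $H\equiv H_{o}$, clear the denominator, and split into cases. Case~b ($g=g_{o}$ constant) is handled exactly as you describe and exactly as the paper does it, reducing to $-f''g_{o}=2H_{o}$ and integrating twice.

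The divergence is in Case~a. You plan a genuine order reduction $p=g'$ followed by one integration. If you carry this out, the separable ODE
\[
\frac{f_{o}\bigl((1-a^{2})+a^{2}f_{o}^{2}p^{2}\bigr)}{(1-f_{o}^{2}p^{2})^{3/2}}\,dp=2H_{o}\,dv
\]
integrates (after the substitution $q=f_{o}p$) to
\[
\frac{f_{o}g'}{\sqrt{1-f_{o}^{2}g'^{2}}}-a^{2}\arcsin(f_{o}g')=2H_{o}v+C,
\]
which is transcendental in $g'$ and does \emph{not} invert to the linear profile displayed in the statement. The paper avoids this integral altogether: in its Case~a it imposes, in addition to $f=f_{o}$, the extra ansatz $g''=\lambda_{3}=\text{const.}$ from the start. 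With that assumption the equation becomes algebraic in $g'$, and a single $v$-derivative forces $g_{v}=\dfrac{\sqrt{9H_{o}^{2}-a^{4}f_{o}^{2}\lambda_{3}^{2}}}{3f_{o}H_{o}}$. So in the paper $\lambda_{3}$ is the \emph{assumed} constant value of $g''$, not a constant of integration, and the stated coefficient drops out of algebra rather than from solving your ODE. Your order-reduction route will not arrive at that specific formula.

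Finally, the paper stops after these two cases; it does not treat the generic situation $f,g$ both non-constant that you propose to eliminate by squaring and repeated differentiation in $u$. Your plan there is more thorough than what the paper actually presents.
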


\begin{proof}
From (3.4), we get%
\begin{equation*}
H_{o}=\frac{(1-a^{2})fg^{\prime \prime }-f^{\prime \prime }g-2af^{\prime
}g^{\prime }+f^{2}f^{\prime \prime }g^{\prime 2}g+2af^{\prime
}f^{2}g^{\prime 3}+a^{2}f^{3}g^{\prime 2}g^{\prime \prime }}{2\left(
1-(fg^{\prime })^{2}\right) ^{3/2}},
\end{equation*}

for solving this equation, the following two cases can be discussed:

\textbf{Case a. }$f=f_{o}$, $g^{\prime \prime }=\lambda _{3}=const.,$ we get%
\begin{equation*}
2H_{o}\left( 1-(fg^{\prime })^{2}\right) ^{3/2}=(1-a^{2})fg^{\prime \prime
}+a^{2}f^{3}g^{\prime 2}g^{\prime \prime },
\end{equation*}

let $u=x,$ $v=z+ax$ and $\partial (u,v)/\partial (x,y)\neq 0$, we can obtain%
\begin{equation}
2H_{o}\left( 1-(fg_{v})^{2}\right)
^{3/2}=(1-a^{2})fg_{vv}+a^{2}f^{3}g_{v}^{2}g_{vv},
\end{equation}

which has the partial derivative with respect to $v$ as%
\begin{equation*}
g_{v}=\frac{\sqrt{9H_{o}^{2}-a^{4}f_{o}^{2}\lambda _{3}^{2}}}{3f_{o}H_{o}}.
\end{equation*}%
Solving this equation gives%
\begin{equation*}
g=\pm \frac{\sqrt{9H_{o}^{2}-a^{4}f_{o}^{2}\lambda _{3}^{2}}}{3f_{o}H_{o}}%
(z+ax)+\lambda _{4},\text{ \ \ }\lambda _{4}\in R
\end{equation*}

then we have%
\begin{equation*}
y(x,z)=f_{o}\left( \frac{\sqrt{9H_{o}^{2}-a^{4}f_{o}^{2}\lambda _{3}^{2}}}{%
3f_{o}H_{o}}(z+ax)+\lambda _{4}\right) .
\end{equation*}

\textbf{Case b. }$g=g_{o}$, we get%
\begin{equation*}
2H_{o}=-f^{\prime \prime }g,
\end{equation*}

so, we obtain%
\begin{equation*}
f=-\frac{H_{o}}{g_{o}}x^{2}+\lambda _{5}x+\lambda _{6}.
\end{equation*}

Where $\lambda _{5},\lambda _{6}\in R.$ Then\ the proof is finised.
\end{proof}

Here, through the study, which presented on affine factorable surface of
second kind in pseudo-Galilean space $G_{3}^{1}$, we conclude with the
following important theory which relates between its mean and Gaussian
curvatures.

\begin{theorem}
Let $\phi :I\subset R\rightarrow G_{3}^{1}$ be an affine factorable surface
in three dimentional pseudo-Galilean space. The relation between its
Gaussian curvature $K$\ and its mean curvature $H$ is given by the formula%
\begin{equation}
H=A(x,z)K,
\end{equation}

where $A(x,z)=\frac{D^{3}(a^{2}fg^{\prime \prime }+2af^{\prime }g^{\prime
}+f^{\prime \prime }g)-fg^{\prime \prime }D}{f^{\prime \prime }fg^{\prime
\prime }g-f^{\prime 2}g^{\prime 2}},$ $D=\sqrt{1-(fg^{\prime })^{2}}$. When $%
D=0,$ then $\phi $\ is isotropic minimal affine factorable surfaces of
second kind.
\end{theorem}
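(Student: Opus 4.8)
The plan is to obtain the formula simply by dividing the two curvature expressions already computed in Section~3. Everything there is built from the quantities $f''fg''g-f'^2g'^2$, $\Omega(x,z)$ and $D=\sqrt{1-(fg')^2}$, which are exactly the ingredients of the claimed identity, so the whole argument should reduce to elementary algebra. The one preliminary I would record first is the purely algebraic identity
\[
\Omega(x,z)=fg''-\bigl(1-(fg')^2\bigr)\bigl(f''g+2af'g'+a^2fg''\bigr)=fg''-D^2\bigl(f''g+2af'g'+a^2fg''\bigr),
\]
which one checks at once by expanding $D^2=1-(fg')^2$ and comparing with the definition of $\Omega$ given just after $(3.4)$. (If desired, the same expansion re-derives $(3.3)$ and $(3.4)$ from $(2.1)$, after noting that $E=1$, $F=0$, $G=(fg')^2-1=-D^2$, so $EG-F^2=-D^2<0$ and $|EG-F^2|=D^2$, while $L=(f''g+2af'g'+a^2fg'')/D$ and $N=fg''/D$.)

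Assuming $D\neq 0$ and $K\neq 0$, I would then form the quotient. From $(3.3)$ and $(3.4)$,
\[
K=\frac{f'^2g'^2-f''fg''g}{D^4},\qquad H=\frac{\Omega(x,z)}{2D^3},\qquad\text{hence}\qquad \frac{H}{K}=\frac{D\,\Omega(x,z)}{2\bigl(f'^2g'^2-f''fg''g\bigr)}.
\]
Substituting the identity above for $\Omega$ and absorbing the factor $D$ turns the numerator into $D^3\bigl(a^2fg''+2af'g'+f''g\bigr)-fg''D$; a single sign rearrangement (the minus picked up by $D\,\Omega$ matched by the minus relating the two orderings of $f''fg''g-f'^2g'^2$) then identifies $H/K$ with the coefficient $A(x,z)$ of the statement, i.e. $H=A(x,z)K$. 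There is no analytic content here: the only things to watch are the bookkeeping of the powers of $D$ ($D^{4}$ against $D^{3}$) and the sign between $f'^2g'^2-f''fg''g$ and $f''fg''g-f'^2g'^2$.

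The one step that genuinely needs separate treatment is the degenerate locus $D=0$, where the division just performed is illegitimate. If $D\equiv 0$ then $f(x)^2g(z+ax)^2\equiv 1$, so $f$ and $g$ vanish nowhere; differentiating $fg'=\pm 1$ with respect to $x$ and to $z$, using $\partial_x g'=ag''$ and $\partial_z g'=g''$, forces $fg''=0$ and $f'g'=0$, hence $f'\equiv 0$ and $g''\equiv 0$. Since every summand of $\Omega(x,z)$ carries a factor $f'$, $f''$ or $g''$ (equivalently, every term of the numerator of $A(x,z)$ carries a factor $D$), we get $\Omega\equiv 0$ and therefore $H\equiv 0$, so by the definition recalled in Section~3 the surface is isotropic minimal. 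I expect this last case, rather than the main computation, to be the only place where anything beyond routine manipulation is required, and with it the proof is complete.
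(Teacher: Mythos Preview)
The paper gives no proof of this theorem at all; the statement is simply asserted between Theorem~5.2 and the examples. Your approach---forming $H/K$ from $(3.3)$ and $(3.4)$ and simplifying via the factorisation $\Omega=fg''-D^{2}(f''g+2af'g'+a^{2}fg'')$---is the natural one and is algebraically sound. Two points are worth flagging.

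First, your quotient actually yields
\[
\frac{H}{K}=\frac{\Omega D}{2\bigl(f'^{2}g'^{2}-f''fg''g\bigr)}
=\frac{D^{3}(a^{2}fg''+2af'g'+f''g)-fg''D}{2\bigl(f''fg''g-f'^{2}g'^{2}\bigr)},
\]
i.e.\ exactly one half of the paper's $A(x,z)$. You glossed over this when you wrote ``identifies $H/K$ with the coefficient $A(x,z)$''. The discrepancy is almost certainly a typo in the paper's displayed $A(x,z)$ (the factor $2$ from the mean-curvature formula $(2.1)$ has been dropped), not an error in your method; but you should state the corrected coefficient rather than claim agreement with the printed one.

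Second, the $D=0$ clause. Your deduction that $f'\equiv0$, $g''\equiv0$ and hence $\Omega\equiv0$ is correct, but it does not give $H\equiv0$: with $D=0$ one has $EG-F^{2}=-D^{2}=0$, so the first fundamental form is degenerate and $H=\Omega/(2D^{3})$ is the indeterminate $0/0$. The paper's assertion that the surface is ``isotropic minimal'' in this case is therefore formal at best; your argument shows the numerator vanishes, which is as much as can honestly be said, and you should say so rather than conclude $H\equiv0$ outright.
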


\section{Some examples}

We illustrate several examples relating to the\ affine factorable surfaces
of second kind with zero and non zero Gaussian ($K$) and mean ($H$)
curvatures in the three dimentional pseudo-Galilean space $G_{3}^{1}$.

\begin{example}
Let us consider the affine factorable surfaces of second kind in $G_{3}^{1}$
given by

\item[(1)] $\phi :y(x,z)=8e^{6x+z},(x,z)\in \lbrack -1,1]\times \lbrack
0,2\pi ],$ (isotropic flat ($K=0$)),

\item[(2)] $\phi :y(x,z)=\sqrt{\frac{3}{4}}(2x+z)+9,(x,z)\in \lbrack
0,15]\times \lbrack -1,30],$ (isotropic minimal ($H=0$)),

\item[(3)] $\phi :y(x,z)=(10x+z)\tanh [x],(x,z)\in \lbrack -1,1],$ ($%
K=const.\neq 0$),

\item[(4)] $\phi :y(x,z)=-x^{2}+2x+1,(x,z)\in \lbrack -1,1],$ ($H=const.\neq
0 $)$.$

These surfaces can be drawn respectively as in Figs.1-4. 
\begin{figure}[h]
\centering
\includegraphics[width=8.5cm, height=7cm]{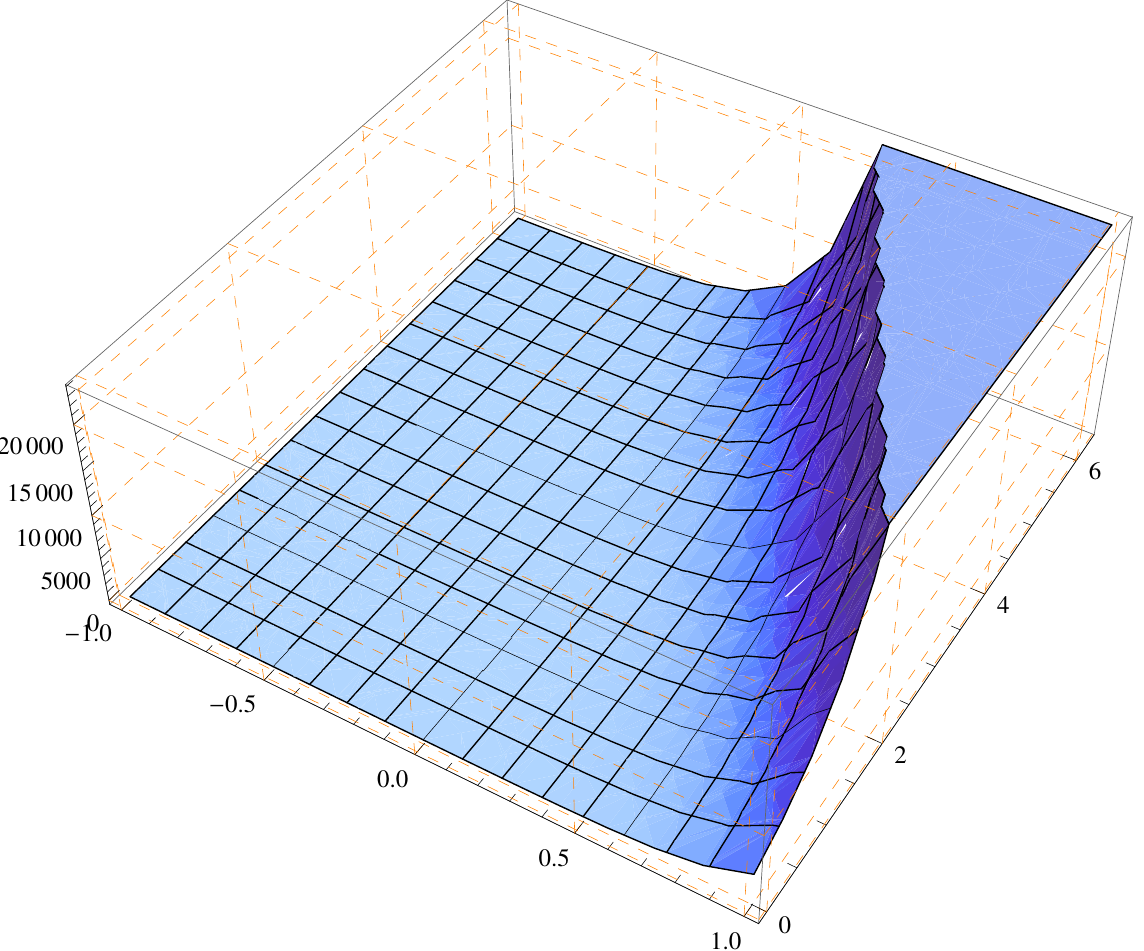}
\caption{An isotropic flat affine factorable surface of second kind.}
\label{fig:fig4}
\end{figure}
\begin{figure}[h]
\centering
\includegraphics[width=8.5cm, height=7cm]{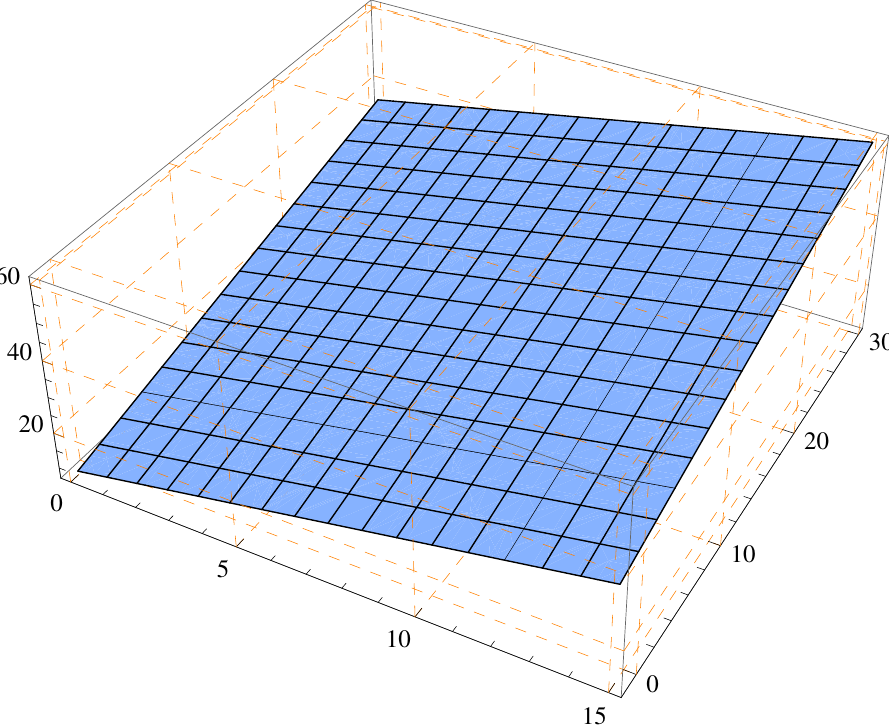}
\caption{An isotropic minimal affine factorable surface of second kind.}
\label{fig:fig4}
\end{figure}
\begin{figure}[h]
\centering
\includegraphics[width=8.5cm, height=7cm]{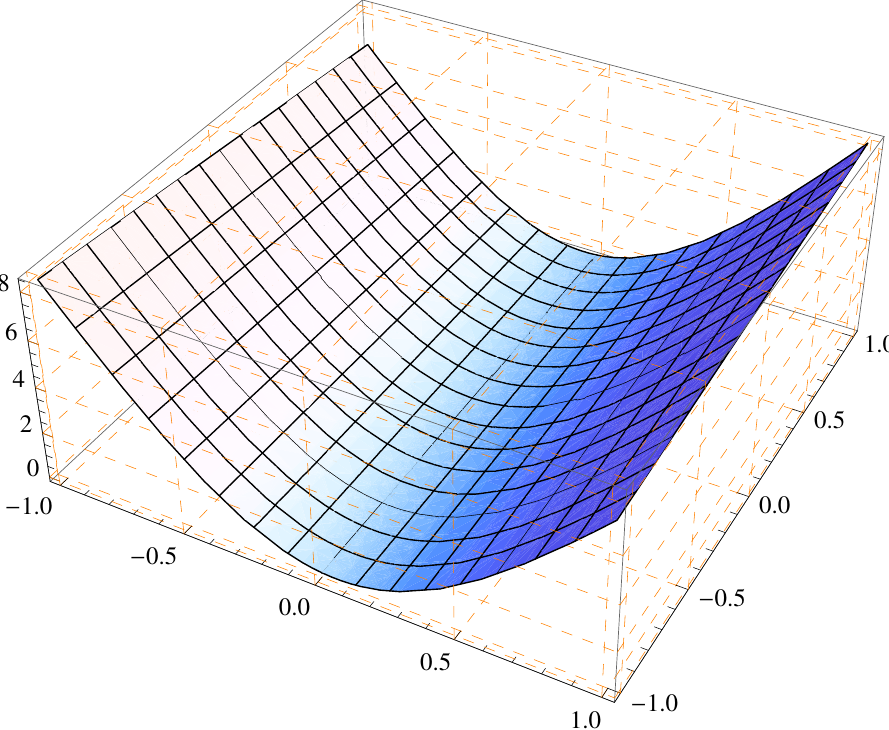}
\caption{Affine factorable surface of second kind with $K=const.\neq 0 $.}
\label{fig:fig4}
\end{figure}
\begin{figure}[h]
\centering
\includegraphics[width=8.5cm, height=7cm]{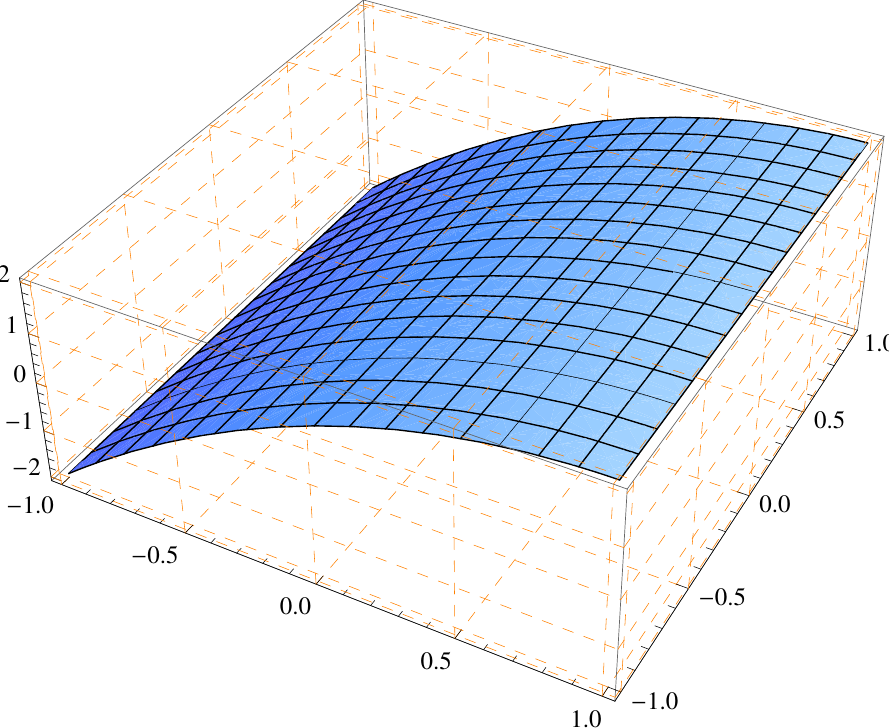}
\caption{Affine factorable surface of second kind with $H=const.\neq 0 $.}
\label{fig:fig4}
\end{figure}
\end{example}

\newpage

\section{Conclusion}

In surface theory in the field of differential geometry, especially
factorable surfaces, there are three kinds of these surfaces known as first,
second and third kind. In this paper, we are interested in studing
factorable surface of second kind which has affine form in the three
dimentional pseudo Galilean space $G_{3}^{1}$. The classification of this
surface with zero and non zero Gaussian and mean curvatures is discussed.
Also, an important relation between the curvatures of this surface is
obtained. Finally, some examples are introduced and plotted.

\end{document}